\newcommand{\R}{\mathbb{R}}
\newcommand{\C}{\mathbb{C}}
\newcommand{\Z}{\mathbb{Z}}
\newcommand{\x}{\mbox{\boldmath $x$}}
\newcommand{\y}{\mbox{\boldmath $y$}}
\newtheorem{theorem}{Theorem}
\newtheorem{corollary}[theorem]{Corollary}
\begin{document}

\title{Large sets of complex and real equiangular lines}

\author{Jonathan Jedwab}
\address{Department of Mathematics, Simon Fraser University, 8888 University Drive, Burnaby, BC, Canada, V5A 1S6}
\author{Amy Wiebe}
\address{Department of Mathematics, University of Washington, Box 354350, Seattle, WA 98195-4350, USA}
\date{20 January 2015 (revised 7 March 2015)}
\email{jed@sfu.ca, awiebe@math.washington.edu}

\begin{abstract} 
Large sets of equiangular lines are constructed from sets of mutually unbiased bases, over both the complex and the real numbers.
\end{abstract}

\maketitle

\section{Introduction} \label{sec:intro}
The \emph{angle} between vectors $\x_j$ and $\x_k$ of unit norm in $\C^d$ is $\arccos|\langle \x_j,\x_k\rangle|$, where $\langle \cdot , \cdot \rangle$ is the standard Hermitian inner product. 
A set of $m$ distinct lines in $\C^d$ through the origin, represented by vectors $\x_1,\ldots,\x_m$ of equal norm, is \emph{equiangular} if for some real constant $a$ we have
\[
|\langle \x_j,\x_k\rangle| = a \quad \mbox{for all $j\ne k$.}
\]
The number of equiangular lines in $\C^d$ is at most $d^2$~\cite{DGS-bounds}, and when the vectors are further constrained to lie in $\R^d$ this number is at most $d(d+1)/2$ (attributed to Gerzon in \cite{Lemmens}). It is an open question, in both the complex and real case, whether the upper bound can be attained for infinitely many~$d$, although in both cases $\Theta(d^2)$ equiangular lines exist for all~$d$. Specifically, K{\"o}nig \cite{konig} constructed $d^2-d+1$ equiangular lines in $\C^d$ where $d-1$ is a prime power, and de Caen \cite{deCaen} constructed $2(d+1)^2/9$ equiangular lines in $\R^d$ where $(d+1)/3$ is twice a power of~4. By extending vectors using zero entries as necessary, we can derive sets of $\Theta(d^2)$ equiangular lines from these direct constructions for all~$d$.

Two orthogonal bases $\{\x_1,\ldots, \x_d\},\{\y_1,\ldots,\y_d\}$ for $\C^d$ are \emph{unbiased} if 
\begin{equation} \frac{|\langle \x_j,\y_k\rangle|}
{||\x_j||\cdot||\y_k||} = \frac{1}{\sqrt{d}} \label{eqn:MUBdefn} 
\quad \mbox{for all $j,k$}.
\end{equation}
A set of orthogonal bases is a set of \emph{mutually unbiased bases} (MUBs) if all pairs of distinct bases are unbiased.

The number of MUBs in $\C^d$ is at most $d+1$ \cite[Table I]{DGS-bounds}, which can be attained when $d$ is a prime power by a variety of methods \cite{Godsil-Roy}, \cite{Kantor-inequivalence}, \cite{Wootters}. The number of MUBs in $\R^d$ is at most $d/2+1$ \cite[Table I]{DGS-bounds}, which can be attained when $d$ is a power of~4 \cite{Calderbank-Z4-Kerdock}, \cite{Cameron-quadratic}.

The authors recently gave a direct construction of $d^2/4$ equiangular lines in $\C^d$, where $d/2$ is a prime power~\cite{Jedwab-Wiebe-MUBs-lines}. We show here how to generalize the underlying construction to give $\Theta(d^2)$ equiangular lines in $\C^d$ and $\R^d$ directly from sets of complex and real MUBs.

\section{The Construction}\label{S:Lblocks}
We associate an ordered set of $m$ vectors in $\C^d$ with the $m \times d$ matrix formed from the vector entries, using the ordering of the set to determine the ordering of the vectors.

\begin{theorem}
\label{thm:concatenation}
Suppose that $B_1, B_2, \dots, B_r$ form a set of $r$ MUBs in $\C^d$, each of whose vectors has all entries of unit magnitude, where $r \le d$. Let $a_1, a_2, \dots, a_t$ be constants in $\C$, where $t \ge 1$.
Let $B_j(v)$ be the set of $d$ vectors formed by multiplying entry $j$ of each vector of $B_j$ by $v \in \C$, and let $L(v) = \cup_{j=1}^r B_j(v)$ (considered as an ordered set).
Then all inner products between distinct vectors among the $rd$ vectors of 
\[
\Big [ L(a_1) \quad L(a_2) \quad \dots \quad L(a_t) \quad L\Big(t+1-\sum_{j=1}^ta_j\Big) \Big ]
\]
in $\C^{(t+1)d}$ have magnitude $\sum_{j=1}^t\left|a_j-1\right|^2 + \left|\sum_{j=1}^t(a_j-1)\right|^2$ or $(t+1)\sqrt{d}$.
\end{theorem}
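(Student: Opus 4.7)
The plan is to set $a_{t+1} = t+1 - \sum_{j=1}^t a_j$, so that the concatenation consists of $t+1$ blocks indexed by $s = 1, \ldots, t+1$ and satisfies the single balance condition $\sum_{s=1}^{t+1}(a_s - 1) = 0$. This identity is what drives every cancellation in the argument.

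First I would record the algebraic identity
\[
\sum_{s=1}^{t+1}\bigl(|a_s|^2-1\bigr) \;=\; \sum_{j=1}^{t}|a_j-1|^2 + \Bigl|\sum_{j=1}^{t}(a_j-1)\Bigr|^2,
\]
proved by writing $|a_s|^2 - 1 = (a_s-1) + \overline{(a_s-1)} + |a_s-1|^2$ and summing over $s$; the two linear terms vanish by the balance condition, and the $|a_{t+1}-1|^2$ term equals the second summand on the right since $a_{t+1}-1 = -\sum_{j=1}^t (a_j-1)$. This reveals the first claimed magnitude as the natural object produced by the construction.

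Next, I would index the $rd$ concatenated vectors by pairs $(j,\x)$ with $\x \in B_j$: the $s$-th block of length $d$ is simply $\x$ with its $j$-th entry scaled by $a_s$. For the inner product of two distinct such concatenated vectors, coming from $(j,\x)$ and $(k,\y)$, I would split into two cases. \textbf{Same basis} ($j = k$, $\x \ne \y$): orthogonality gives $\langle \x,\y\rangle = 0$, and a direct block-by-block computation shows that block $s$ contributes $(|a_s|^2-1)\, x_j\overline{y_j}$; summing over $s$ and using the hypothesis $|x_j| = |y_j| = 1$, the magnitude equals $\sum_{s=1}^{t+1}(|a_s|^2 - 1)$, which by the identity above is the first claimed magnitude. \textbf{Different bases} ($j \ne k$): block $s$ contributes $\langle \x, \y\rangle + (a_s - 1)\, x_j \overline{y_j} + (\overline{a_s} - 1)\, x_k \overline{y_k}$; summing over $s$, the two linear-in-$a_s$ terms vanish by the balance condition, leaving $(t+1)\langle \x, \y\rangle$, whose magnitude is $(t+1)\sqrt{d}$ because $\x,\y$ have norm $\sqrt{d}$ and are drawn from distinct MUBs.

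The main obstacle is really just recognizing up front that the choice $a_{t+1} = t+1-\sum_{j=1}^t a_j$ is exactly what makes the linear cancellation in the cross-basis case and the clean evaluation in the same-basis case work simultaneously; once the balance condition is in hand, the remaining computations are routine bookkeeping, with the unit-magnitude entry hypothesis ensuring the same-basis contribution is independent of the specific vectors $\x,\y$.
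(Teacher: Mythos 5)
Your proof is correct and follows essentially the same route as the paper's: the same two-case split (same basis versus distinct bases), the same block-by-block computation, and the same cancellation driven by $\sum_{s=1}^{t+1}(a_s-1)=0$. The only difference is cosmetic --- you make explicit the algebraic identity that the paper dismisses as ``straightforward algebraic manipulation,'' which is a welcome addition but not a different argument.
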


\begin{proof} 
Write $A = \{a_1, a_2, \dots, a_t, t+1-\sum_{j=1}^t a_j\}$ for the set of arguments~$v \in \C$ taken by $L(v)$ in the construction.
We consider two cases, according to whether distinct vectors of $L(v)$ originate from the same basis or from distinct bases. 

In the first case, consider the inner product of distinct vectors of $L(v)$ constructed from vectors from the same basis $B_j$. Since the original vectors are orthogonal, this inner product is $z(|v|^2-1)$ for some $z$ of unit magnitude that depends only on the original two vectors. Since each occurrence of $L(v)$ uses the same ordering, the inner product of the corresponding concatenated vectors in $\C^{(t+1)d}$ is therefore $z \sum_{v\in A} (|v|^2-1)$, which equals $z\Big(\sum_{j=1}^t\left|a_j-1\right|^2 + \left|\sum_{j=1}^t(a_j-1)\right|^2\Big)$ after straightforward algebraic manipulation.

In the second case, consider vectors of $L(v)$ constructed from vectors from distinct bases $B_j, B_k$ . Let these constructed vectors be 
\[
\begin{array}{rcccccc}
\x = (x_{1} & x_2 & \ldots & vx_j   & \ldots & \ldots & x_d),\\
\y = (y_{1} & y_2 & \ldots & \ldots & vy_k   & \ldots & y_d). 
\end{array}
\]
The inner product of $\x$ and $\y$ in $L(v)$ is
\[
x_1\overline{y_1} + \cdots + vx_j\overline{y_j} +\cdots+\overline{v}x_k\overline{y_k}+\cdots+x_d\overline{y_d} 
= \sum_{\ell=1}^d x_\ell\overline{y_\ell} + (v-1)x_j\overline{y_j}+(\overline{v}-1)x_k\overline{y_k}.
\]
Therefore the corresponding concatenated vectors in $\C^{(t+1)d}$ have inner product 
\[
(t+1)\sum_{\ell=1}^d x_\ell\overline{y_\ell} + x_j\overline{y_j} \sum_{v\in A} (v-1) + x_k\overline{y_k} \sum_{v\in A}(\overline{v}-1) 
= (t+1)\sum_{\ell=1}^d x_\ell\overline{y_\ell},
\]
because $\sum_{v \in A}v = t+1$. 
Now, all of the entries $x_\ell$, $y_\ell$ have unit magnitude by assumption, and so $\left|\sum_{\ell=1}^d x_\ell\overline{y_\ell}\right| = \sqrt{d}$ by the MUB property~\eqref{eqn:MUBdefn}. Therefore the concatenated vectors in $\C^{(t+1)d}$ have inner product of magnitude $(t+1)\sqrt{d}$.
\end{proof}

\emph{Remark.}
Lemma~6.2 of \cite{Jedwab-Wiebe-MUBs-lines} describes the special case $t=1$ and $r=d$ of Theorem~\ref{thm:concatenation}, in which the MUBs are constrained to arise from a $(d,d,d,1)$ relative difference set in an abelian group according to the construction method of~\cite{Godsil-Roy}; the permutation $\pi$ given in \cite[Lemma~6.2]{Jedwab-Wiebe-MUBs-lines} can be dropped without loss of generality.

\begin{corollary}
\label{cor:complex-lines}
Let $t$ be a positive integer and let $d$ be a prime power. 
There exist $d^2$ equiangular lines in $\C^{(t+1)d}$.
\end{corollary}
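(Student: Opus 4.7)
The plan is to invoke Theorem \ref{thm:concatenation} with $r=d$, choosing the scalars $a_1,\ldots,a_t$ so that the two possible inner-product magnitudes collapse into one. This produces $rd = d^2$ vectors in $\C^{(t+1)d}$ whose pairwise inner products all share a single magnitude, from which the equiangular line set follows.

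First I would secure the input MUBs. Since $d$ is a prime power, standard constructions (for example, that of Wootters--Fields) yield a full set of $d+1$ MUBs in $\C^d$ in which $d$ of the bases consist of vectors whose entries all have magnitude $1/\sqrt{d}$; rescaling by $\sqrt{d}$ gives $d$ MUBs with entries of unit magnitude, exactly as required by the hypotheses of Theorem \ref{thm:concatenation}.

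Next I would set $a_1 = a_2 = \cdots = a_t = a$ for a real $a$, so that the first candidate magnitude from Theorem \ref{thm:concatenation} simplifies to
\[
\sum_{j=1}^t(a-1)^2 + \Bigl(\sum_{j=1}^t(a-1)\Bigr)^{\!2} = t(t+1)(a-1)^2.
\]
Setting this equal to the second candidate magnitude $(t+1)\sqrt{d}$ gives the explicit choice $a = 1 + d^{1/4}/\sqrt{t}$. With this choice Theorem \ref{thm:concatenation} guarantees that every inner product between distinct vectors among the $d^2$ concatenated vectors has magnitude exactly $(t+1)\sqrt{d}$.

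Finally, I would check that these $d^2$ vectors do represent $d^2$ distinct lines. By construction every vector has the same squared norm $N = (t+1)(d-1) + ta^2 + (t+1-ta)^2$, and a short computation shows $N > (t+1)\sqrt{d}$ for every prime power $d\ge 2$ and every $t\ge 1$, so by the strict Cauchy--Schwarz inequality no two of the $d^2$ vectors are parallel. This nonparallelism verification is the only step that requires genuine (if brief) care; once it is in hand, Theorem \ref{thm:concatenation} delivers the equiangular set immediately.
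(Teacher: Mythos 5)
Your proposal is correct and follows essentially the same route as the paper: obtain $d$ MUBs with unit-magnitude entries from a full set of $d+1$ MUBs containing the standard basis, apply Theorem~\ref{thm:concatenation} with $r=d$, and choose $a_j = 1+d^{1/4}/\sqrt{t}$ to equalize the two inner-product magnitudes. Your additional verification that no two vectors are parallel is a sound (and slightly more careful) touch that the paper handles only implicitly via the remark that the common angle is $\arccos(1/(1+\sqrt{d}))$.
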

\begin{proof}
There exists a set of $d+1$ MUBs in $\C^d$ for which one of the bases is the standard basis~\cite{Wootters}. After appropriate scaling, all entries of each of the vectors of the remaining $d$ bases therefore have unit magnitude, using~\eqref{eqn:MUBdefn}. So we may apply Theorem~\ref{thm:concatenation} with $r=d$. 

There are infinitely many choices of $a_1, a_2, \dots, a_t \in \C$ for which the two magnitudes in the conclusion of Theorem~\ref{thm:concatenation} are equal, one such choice being $a_j = 1+d^{1/4}/\sqrt{t}$ for each~$j$.
\end{proof}

\begin{corollary}
\label{cor:real-lines}
Let $t$ be a positive integer and let $d$ be a power of~$4$.
There exist $d^2/2$ equiangular lines in $\R^{(t+1)d}$. 
\end{corollary}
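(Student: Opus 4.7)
The plan is to mirror the proof of Corollary~\ref{cor:complex-lines}, substituting real MUBs for complex ones and verifying that the construction of Theorem~\ref{thm:concatenation} produces real output vectors.

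First I would invoke the existence of $d/2+1$ MUBs in $\R^d$ when $d$ is a power of~$4$, as established in~\cite{Calderbank-Z4-Kerdock} and~\cite{Cameron-quadratic}. One of these bases can be taken to be the standard basis of $\R^d$; the MUB condition~\eqref{eqn:MUBdefn} then forces every entry of every vector in the remaining $d/2$ bases to have magnitude $1/\sqrt{d}$, and since these vectors are real, each entry is $\pm 1/\sqrt{d}$. After rescaling, these $d/2$ bases consist of real vectors all of whose entries have unit magnitude.

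Next I would apply Theorem~\ref{thm:concatenation} to this collection of bases with $r = d/2$ (so that the hypothesis $r \le d$ holds) and with real constants $a_1,\dots,a_t \in \R$. Since the construction consists only of componentwise scaling by the $a_j$, concatenation, and taking complex conjugates, real inputs produce vectors in $\R^{(t+1)d}$; in particular the final argument $t+1-\sum_j a_j$ is also real. The same explicit choice $a_j = 1 + d^{1/4}/\sqrt{t}$ used in Corollary~\ref{cor:complex-lines} is real and equalizes the two candidate inner product magnitudes from the theorem, yielding $rd = d^2/2$ equiangular vectors in $\R^{(t+1)d}$.

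The only genuine change from the complex case is that the smaller upper bound on the number of real MUBs halves the value of $r$, giving $d^2/2$ lines instead of $d^2$; otherwise the complex argument transports verbatim, so no new calculation is required beyond observing that real MUBs with entries $\pm 1$ may be fed into Theorem~\ref{thm:concatenation} together with real scalars.
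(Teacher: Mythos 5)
Your proposal is correct and follows the same route as the paper's own proof: invoke the $d/2+1$ real MUBs of \cite{Calderbank-Z4-Kerdock}, \cite{Cameron-quadratic} with one basis standard, rescale so the remaining $d/2$ bases have entries $\pm 1$, and apply Theorem~\ref{thm:concatenation} with $r=d/2$ and $a_j = 1+d^{1/4}/\sqrt{t}$. The extra details you supply (why the entries are $\pm 1$, why the output vectors are real) are correct and merely make explicit what the paper leaves implicit.
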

\begin{proof}
There exists a set of $d/2+1$ MUBs in $\R^d$ for which one of the bases is the standard basis \cite{Calderbank-Z4-Kerdock}, \cite{Cameron-quadratic}. Apply Theorem~\ref{thm:concatenation} with $r=d/2$ and take, for example, $a_j = 1+d^{1/4}/\sqrt{t}$ for each~$j$ to obtain real equiangular lines.
\end{proof}

The proof of Theorem~\ref{thm:concatenation} shows that the magnitude of the inner product of distinct vectors is $\sum_{v\in A}(|v|^2-1)$ or $(t+1)\sqrt{d}$. In the construction of Corollaries~\ref{cor:complex-lines} and~\ref{cor:real-lines}, the constants $a_j$ are chosen so that these magnitudes are equal, and the inner product of each concatenated vector with itself is $\sum_{v\in A} (|v|^2+d-1)$. It follows that the common angle for the sets of equiangular lines constructed in Corollaries~\ref{cor:complex-lines} and~\ref{cor:real-lines} is $\arccos(1/(1+\sqrt{d}))$ for all $t$, regardless of the choice of the constants~$a_j$. 

Theorem~\ref{thm:concatenation} can be generalized as follows. Let $c_1, \dots, c_t$ be real constants, and take the $rd$ vectors of
\[
\Big [ c_1L(a_1) \quad c_2L(a_2) \quad \dots \quad c_tL(a_t) \quad L\Big(1+\sum_{j=1}^tc_j^2(1-a_j)\Big) \Big ]
\]
in $\C^{(t+1)d}$.
Then all inner products between distinct vectors have magnitude $\sum_{j=1}^t c_j^2\left|1-a_j\right|^2 + \left|\sum_{j=1}^t c_j^2(1-a_j)\right|^2$ or $(1+\sum_{j=1}^tc_j^2) \sqrt{d}$.
If $a_1,a_2,\dots,a_t$ and $c_1,c_2,\dots,c_t$ are chosen so that these two magnitudes are equal, the common angle of the resulting set of equiangular lines is again $\arccos(1/(1+\sqrt{d}))$.

\emph{Remark.}
The case $t=1$ and $d=4$ of Corollary~\ref{cor:real-lines} constructs 8 equiangular lines in $\R^8$ having the form $\left[ L(a) \quad L(2-a) \right]$, where $a \in \{1 \pm \sqrt{2}\}$. We can extend this to a set 
$\left[ \begin{smallmatrix} L(a) & L(2-a) \\ L(2-a) & L(a) \end{smallmatrix} \right]$
of 16 equiangular lines in $\R^8$, where $a \in \{1 \pm \sqrt{2}\}$; this extension does not seem to generalize easily to larger values of~$d$.

\section*{Acknowledgements}
J. Jedwab is supported by an NSERC Discovery Grant.


\end{document}